\newdimen\AAdi%
\newbox\AAbo%
\def\AAk#1#2{\s_etbox\AAbo=\hbox{#2}\AAdi=\wd\AAbo\kern#1\AAdi{}}%
\def\AAr#1#2#3{\s_etbox\AAbo=\hbox{#2}\AAdi=\ht\AAbo\raise#1\AAdi\hbox{#3}}%
\font\tenmsb=msbm10 at 12pt \font\sevenmsb=msbm7 at 8pt
\font\fivemsb=msbm5 at 6pt
\def\Bbb#1{{\tenmsb\fam\msbfam#1}}
\newtheorem{thm}{Theorem}[section]
\newtheorem{cor}{Corollary}[section]
\newtheorem{rem}{Remark}[section]
\newtheorem{pro}{Proposition}[section]
\newcommand{\ba}{\begin{array}}
\newcommand{\ea}{\end{array}}
\newcommand{\Section}[2]{\setcounter{equation}{0}
\allowdisplaybreaks
\section[#1]{#2}}
\def\n{\nabla}
\def\bn{\overline\nabla}
\def\ir#1{\mathbb R^{#1}}
\def\f#1#2{\frac{#1}{#2}}
\def\grs#1#2{\bold G_{#1,#2}}
\def\dt#1{\frac {d\,#1}{d\,t}}
\def\mc#1{\mathcal{#1}}
\def\td{\tilde}
\def\a{\alpha}
\def\be{\beta}
\def\p#1{\partial #1}
\def\de{\delta}
\def\De{\Delta}
\def\ep{\epsilon}
\def\G{\Gamma}
\def\g{\gamma}
\def\la{\lambda}
\def\om{\omega}
\def\Om{\Omega}
\def\th{\theta}
\def\w{\wedge}
\def\ze{\zeta}
\def\Hess{\mbox{Hess}}
\def\R{\Bbb{R}}
\def\lan{\langle}
\def\ran{\rangle}
\def\ra{\rightarrow}
\def\bn{\bar{\nabla}}
\begin{document}

\title[The rigidity theorems of self shrinkers]{The Rigidity Theorems of Self Shrinkers \\ via Gauss maps}
\author{Qi Ding}
\author{Y.L. Xin}
\author{Ling Yang}
\address{Institute of Mathematics, Fudan University,
Shanghai 200433, China} \email{09110180013@fudan.edu.cn}
\email{ylxin@fudan.edu.cn}\email{yanglingfd@fudan.edu.cn}
\thanks{The research was partially supported by
NSFC}

\begin{abstract}
We study the rigidity results for self-shrinkers in Euclidean space
by restriction of the image under the Gauss map. The geometric
properties of the target manifolds carry into effect. In the
self-shrinking hypersurface situation  Theorem \ref{Ri1} and Theorem
\ref{Ri2} not only improve the previous results, but also are
optimal. In higher codimensional case, using geometric properties of
the Grassmanian manifolds (the target manifolds of the Gauss map) we
give a rigidity theorem for self-shrinking graphs.

\end{abstract}

\maketitle

\Section{Introduction}{Introduction}

\medskip

Minimal submanifolds and self-shrinkers both are special solutions
to the mean curvature flow. Those two subjects share many geometric
properties, as shown in \cite{CM1}. We continue to study rigidity
properties of self-shrinkers. In the previous work we discuss the
gap phenomena for squared norm of the second fundamental form for
self-shrinkers \cite{DX1}. For  submanifolds in Euclidean space we
have the important Gauss map, which plays essential role in
submanifold theory. In the present paper we shall study the gap
phenomena of the image under the Gauss maps for self-shrinkers. In
the literature \cite{EH}\cite{CM1} the polynomial volume growth is
an adequate assumption for the complete non-compact self-shrinkers.
Ding-Xin \cite{DX} showed that the properness shall guarantee  the
Euclidean volume growth. Afterwards, Chen-Zhou \cite{CZ} proved that
the inverse is also true. It is unclear if there exists a complete
improper self-shrinker in Euclidean space. Now, we only study
properly immersed self-shrinkers. We pursue the results that a
complete properly immersed self-shrinker would become an affine
linear subspace or a cylinder, if its Gauss image is sufficiently
restricted.

In the next section we show that the Gauss map of a self-shrinker is
a weighted harmonic map, which is a conclusion of the Ruh-Vilms type
result for self-shrinkers, see Theorem \ref{RV}. We also derive a
composition formula for the drift-Laplacian operator defined on
self-shrinkers, which enables us to obtain some results of
self-shrinkers via properties of the target manifold of the Gauss
map in the subsequent sections of this paper.

In \S 3 we study the codimension one case. If $M$ is an entire
graphic self shrinking hypersurface in $\R^{n+1},$ Ecker-Huisken
showed that  $M$ is a hyperplane \cite{EH} under the assumption of
polynomial volume growth, which was removed  by Wang \cite{W}.
Namely, any entire graphic self-shrinking hypersurface in Euclidean
space has to be a hyperplane. It is in sharp contrast to the case of
minimal graphic hypersurfaces. For  constant mean curvature surfaces
in $\ir{3}$ there is the well-known results, due to
Hoffeman-Osserman-Schoen \cite{HOS}. Their results show that a plane
or a circular cylinder could be characterized by its Gauss image
among other complete constant mean curvature surfaces in $\ir{3}.$
In this circumstance we consider a properly immersed self-shrinking
hypersurface $M$ in $\R^{n+1}$. Now the target manifolds of the
Gauss map for a self shrinking hypersurface in $\R^{n+1}$  is the
unit sphere. We  obtain a counterpart of their results and prove
that if the image under the Gauss map is contained in an open
hemisphere (which includes the case of graphic self shrinking
hypersurfaces in $\R^{n+1}$), then $M$ is a hyperplane. If the image
under the Gauss map is contained in a closed hemisphere, then $M$ is
a hyperplane or a cylinder over a self-shrinker of one dimension
lower, see Theorem \ref{Ri1}. The convex geometry of the sphere has
been studied extensively by Jost-Xin-Yang \cite{J-X-Y1}. Using their
technique we could improve the first part of  Theorem \ref{Ri1} and
obtain Theorem \ref{Ri2}, which is the best possible. The omitting
range of the Gauss image would be the codimension one closed
hemisphere $\overline{S}_+^{n-1},$ much smaller than the closed
hemisphere $\overline{S}_+^n$ in Theorem \ref{Ri1}.

In \S 4 we study the higher codimensional graphic situation. The
target manifold of the Gauss map is the Grassmannian manifold now.
To study the higher codimensional Bernstein problem, Jost-Xin-Yang
obtained some interesting geometric properties of the Grassmannian
manifolds and developed some skilled technique \cite{J-X-Y}. This
enables us to obtain rigidity results of higher codimension for
self-shrinkers.  Using Theorem 3.1 in \cite{J-X-Y}, Ding-Wang
obtained a result for this problem \cite{DW}. Now, using the method
of Theorem 3.1 in \cite {J-X-Y}  we prove Proposition \ref{subhar3}
to fit the present situation. Therefore, we obtain Theorem
\ref{Ri3}, which improves corresponding results in \cite{DW}. As
for   Lagrangian self-shrinkers (a special higher codimensional case),
readers are referred to the papers \cite{CCY},\cite{HW} and
\cite{DX2}.

The weighted harmonic maps have already been introduced and studied.
For convenience we describe its basic notion in an appendix, as the
final section.

\bigskip

\Section{Gauss maps for self shrinkers}{Gauss maps for
self shrinkers}

\medskip

If $M$ is an oriented submanifold in $\R^{m+n}$, we can define the
Gauss map $\g: M\ra \grs{n}{m}$ that is obtained by parallel
translation of $T_pM$ to the origin in the ambient space $\ir{m+n}.$
Here
 $\grs{n}{m}$ is the Grassmannian manifolds constituting of all oriented $n$-subspaces in
 $\R^{m+n}$. It is a Riemannian symmetric space of compact type. When $m=1$, $\grs{n}{1}$ becomes
Euclidean sphere. The properties of the Gauss map implies the
properties of the submanifolds.

Let $(M,g)$ be an $n$-dimensional Riemannian manifold, and $X: M
\rightarrow\ir{m+n}$ be an isometric immersion. Let $\n$ and
$\bn$ be Levi-Civita connections on $M$ and $\R^{m+n}$,
respectively. The second fundamental form $B$ is defined by
$B_{VW}=(\bn_VW)^N=\bn_VW-\n_VW$ for any vector fields $V,W$
along the submanifold $M$, where $(\cdots)^N$ is the projection
onto the normal bundle $NM$. Similarly, $(\cdots)^T$ stands for
the tangential projection.  Taking the trace of $B$ gives the
mean curvature vector $H$ of $M$ in $\ir{m+n}$,  a cross-section
of the normal bundle. In what follows we use $\n$ for natural
connections on various bundles for notational simplicity if there
is no ambiguity from the context. For $\nu\in\G(NM)$ the shape
operator $A^\nu: TM\to TM$, defined by $A^\nu(V)=-(\bn_V\nu)^T$,
satisfies $\left<B_{V W}, \nu\right>=\left<A^\nu(V), W\right>.$

The second fundamental form, curvature tensors of the submanifold,
curvature tensor of the normal bundle and that of the ambient
manifold satisfy the Gauss equations, the Codazzi equations and the
Ricci equations (see \cite{X2}, for example).

We now consider the mean curvature flow for a submanifold  $M$ in
$\ir{m+n}.$ Namely, consider a one-parameter family $X_t=X(\cdot,
t)$ of immersions $X_t:M\to \ir{m+n}$ with corresponding images
$M_t=X_t(M)$ such that
\begin{equation*}\left\{\begin{split}
\dt{}X(p, t)&=H(p, t),\qquad p\in M\\
X(p, 0)&=X(p)
\end{split}\right.
\end{equation*}
is satisfied, where $H(p, t)$ is the mean curvature vector of $M_t$
at $X(p, t)$ in $\ir{m+n}.$

An important class of solutions to the above mean curvature flow
equations are self-similar shrinkers, whose profiles,
self-shrinkers, satisfy a system of quasi-linear elliptic PDE of the
second order
\begin{equation}\label{SS}
H = -\frac{X^N}{2}.
\end{equation}

Let $\De$, $\mathrm{div}$ and $d\mu$ be Laplacian, divergence and
volume element on $M$ induced by the metric $g$, respectively.
Colding and Minicozzi in \cite{CM1} introduced a linear operator,
drift-Laplacian
\begin{equation}
\mc{L}=\De-\frac{1}{2}\langle
X,\n(\cdot)\rangle=e^{\f{|X|^2}4}\mathrm{div}(e^{-\f{|X|^2}4}\n(\cdot))
\end{equation}
on self-shrinkers. They showed that $\mc{L}$ is self-adjoint
respect to the measure $e^{-\f{|X|^2}4}d\mu$. In the present paper we
carry out integrations with respect to this measure. We denote
\begin{equation}\label{rho}
\rho:=e^{-\f{|X|^2}4}
\end{equation}
 and the volume form
 $d\mu$ might be omitted in the integrations for notational simplicity.

Especially if $M$ is a graph over a domain $\Om\in\R^{n}$, namely,
$$M=\big\{(x_1,\cdots,x_n,u^1,\cdots,u^m):  u^{\alpha}=u^{\alpha}(x_1,\cdots,x_n)\big\}.$$
Then the induced metric of $M$
$$g=g_{ij}dx_idx_j=(\de_{ij}+u^\a_iu^\a_j)dx_idx_j$$
with $u_i^\a=\f{\p u^\a}{\p x_i}$.
Let
$x:=(x_1,x_2,\cdots,x_n)$, $(g^{ij})$ be the inverse matrix of
$(g_{ij})$ and
\begin{equation}\label{v}
v=\De_u=\det g
\end{equation}
be the slope of the vector-valued function $u$.
Then the equation \eqref{SS} can be written as
the  following elliptic system(see \cite{DW})
\begin{equation}\label{GSS}
\sum_{i,j=1}^ng^{ij}u^{\alpha}_{ij}=\f12(x\cdot D
u^{\alpha}-u^{\alpha}),
\end{equation}
where $u_{ij}^\a:=\f{\p^2 u^\a}{\p x_i\p x_j}$ and $Du^\a:=(u^\a_1,\cdots,u^\a_n)$.

By a straightforward calculation,
\begin{equation}\aligned\label{vgij1}
\p_i(vg^{ij})=&\f12vg^{kl}\p_ig_{kl}g^{ij}-vg^{ki}\p_ig_{kl}g^{lj}\\
=&\f12vg^{kl}(u_{ki}^\a u_l^\a+u_{k}^\a u_{li}^\a)g^{ij}-vg^{ki}(u_{ki}^\a u_l^\a+u_{k}^\a u_{li}^\a)g^{lj}\\
=&-vg^{ki}u_{ki}^\a u_l^\a g^{lj}.
\endaligned
\end{equation}
Substituting \eqref{GSS} into \eqref{vgij1} gives
\begin{equation}\aligned\label{vgij2}
\p_i(vg^{ij})=&\f12v(u^\a-x_iu_i^\a)u_l^\a g^{lj}\\
=&\f12vu^\a u_l^\a g^{lj}-\f12vx_i(g_{il}-\de_{il})g^{lj}\\
=&\f12vu^\a u_i^\a g^{ij}-\f12vx_j+\f12vx_ig^{ij}.
\endaligned
\end{equation}
For any $C^2-$function $f$ in $M$, combining \eqref{vgij2}, we have
\begin{equation}\aligned
\mathcal{L}f=&\f1v e^{\f{|X|^2}4}\f{\p}{\p x_i}\left(g^{ij}ve^{-\f{|X|^2}4}\f{\p}{\p x_j}f\right)\\
=&g^{ij}f_{ij}+\f1v \p_i(g^{ij}v)f_j-\f12g^{ij}(x_i+u^\a u^\a_i)f_j\\
=&g^{ij}f_{ij}+\left(\f12u^\a u_i^\a g^{ij}-\f12x_j+\f12x_ig^{ij}\right)f_j-\f12g^{ij}(x_i+u^\a u^\a_i)f_j\\
=&g^{ij}f_{ij}-\f12x_jf_j.
\endaligned
\end{equation}

\begin{rem} For graphic self shrinkers in $\R^{m+n}$, the operator $L_g$ defined in \cite{DW} is
precisely the drift-Laplace $\mathcal{L}$. Please see \cite{DX2} for Lagrangian case.
\end{rem}

Once $M$ is  minimal, the Gauss map of $M$ must be a harmonic map.
This is a conclusion of the well-known Ruh-Vilms theorem \cite{RV},
which reveals the close relationship between Liouville type theorems
for harmonic maps and Bernstein type results for minimal
submanifolds. There is also a notion of $\rho-$ weighted harmonic
maps. Its definition  shall be given in Appendix. We have the
counterpart  of the Ruh-Vilms theorem.

\begin{thm}\label{RV}
For an oriented $n-$dimensional submanifold $X:M\to\ir{m+n}$ its
Gauss map $\g: M\to \grs{n}{m}$ is $\rho-$weighted harmonic map if
and only if $H+\f{1}{2}X^N$ is a parallel vector field in the normal
bundle $NM$.
\end{thm}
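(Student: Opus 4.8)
The plan is to compute the $\rho$-weighted tension field $\tau_\rho(\g)$ of the Gauss map and to identify it, under the canonical isomorphism $\g^{-1}T\grs{n}{m}\cong\mathrm{Hom}(TM,NM)$, with the normal covariant derivative of $H+\f12X^N$. Recall from the Appendix that with weight $\rho=e^{-|X|^2/4}$ the weighted tension field is $\tau_\rho(\g)=\tau(\g)+d\g(\n\log\rho)$, where $\tau(\g)$ denotes the ordinary tension field. Since $\n\log\rho=-\f14\n|X|^2=-\f12X^T$ on $M$ (using $\bn_VX=V$ for the position vector, so that $\n_V|X|^2=2\lan X,V\ran=2\lan X^T,V\ran$), this reduces to $\tau_\rho(\g)=\tau(\g)-\f12d\g(X^T)$.

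First I would recall the infinitesimal description of the Gauss map underlying the classical Ruh--Vilms theorem \cite{RV}: under the identification $T_{\g(p)}\grs{n}{m}\cong\mathrm{Hom}(T_pM,N_pM)$ one has $d\g(V)(e_i)=B_{Ve_i}$, so that $d\g$ corresponds to the second fundamental form $B$ viewed as an $NM$-valued symmetric $2$-tensor. The ordinary tension field is then the trace of the van der Waerden--Bortolotti derivative of $B$, namely $\tau(\g)(e_j)=\sum_i(\n_{e_i}B)(e_i,e_j)$. Applying the Codazzi equation (valid since $\ir{m+n}$ is flat) to interchange the differentiating and evaluation slots, together with $\sum_iB_{e_ie_i}=H$, yields $\tau(\g)(e_j)=\n_{e_j}H$, the normal connection derivative of the mean curvature vector. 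I expect this identification of the pulled-back Levi--Civita connection of the symmetric space $\grs{n}{m}$ with the connection built from $\n$ on $TM$ and the normal connection on $NM$ to be the only genuinely delicate point; it is, however, exactly the content of the classical theorem and may be quoted.

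It remains to rewrite the drift term $d\g(X^T)$ intrinsically. By the identification and the symmetry of $B$, $d\g(X^T)(e_j)=B_{X^Te_j}=(\bn_{e_j}X^T)^N$. Splitting $\bn_{e_j}X=\bn_{e_j}X^T+\bn_{e_j}X^N$ and taking the normal part of the position-vector identity $\bn_{e_j}X=e_j$, whose normal component vanishes, gives $B_{X^Te_j}+\n_{e_j}(X^N)=0$, that is, $d\g(X^T)(e_j)=-\n_{e_j}(X^N)$.

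Combining the two computations,
\[
\tau_\rho(\g)(e_j)=\n_{e_j}H+\f12\,\n_{e_j}(X^N)=\n_{e_j}\Big(H+\f12X^N\Big).
\]
Hence $\g$ is $\rho$-weighted harmonic, i.e. $\tau_\rho(\g)\equiv0$, precisely when $\n_{e_j}\big(H+\f12X^N\big)=0$ for every frame vector $e_j$, which is exactly the assertion that $H+\f12X^N$ is parallel in the normal bundle $NM$. In particular, on a self-shrinker \eqref{SS} this quantity vanishes identically, so the Gauss map is automatically $\rho$-weighted harmonic.
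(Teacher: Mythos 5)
Your argument is correct and follows essentially the same route as the paper's proof: both split the weighted tension field into the ordinary tension field (identified with $\n H$ via the Codazzi equation) plus the drift term $d\g(\n\log\rho)=-\f12 d\g(X^T)$, and both recognize the latter as $\f12\n^{\perp}(X^N)$ by differentiating the position vector. The only difference is presentational — you work invariantly through the identification $T\grs{n}{m}\cong\mathrm{Hom}(TM,NM)$ and quote the classical Ruh--Vilms computation, whereas the paper carries out the same steps explicitly in Pl\"ucker coordinates.
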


\begin{proof}

Let $\{e_1,\cdots,e_n\}$ be a local tangent orthonormal frame field
on $M$ and $\{\nu_1,\cdots,\nu_m\}$ be a local normal orthornormal
frame field on $M$, and we assume $\n e_i=0$ and $\n \nu_\a=0$ at
the considered point. Here and in the sequel we use summation
convention and assume the range of indices.
$$1\leq i,j,k\leq n,\qquad 1\leq \a,\be\leq m.$$
Using Pl\"ucker coordinates, the Gauss map $\g$ could be described as
$\g(p)=e_1\w\cdots\w e_n$, thus
\begin{equation}\aligned\label{tension1}
d\g(e_i)&=\bar{\n}_{e_i}(e_1\w\cdots\w e_n)\\
        &=\sum_j e_1\w\cdots\w B_{e_ie_j}\w \cdots\w e_n\\
        &=\sum_j e_1\w\cdots\w h_{\a,ij} \nu_\a\w \cdots\w e_n\\
        &=h_{\a,ij} e_{\a j}
        \endaligned
        \end{equation}
where $\{h_{\a,ij}=\lan B_{e_ie_j},\nu_\a\ran:1\leq i,j\leq n,1\leq
\a\leq m\}$ are coefficients of the second fundamental form, and
$e_{\a j}$ is obtained by replacing $e_j$ by $\nu_\a$ in
$e_1\w\cdots\w e_n$. We note that $\{e_{\a j}:1\leq j\leq n, 1\leq
\a\leq m\}$ is an orthornormal basis of the tangent space of
$\grs{n}{m}$ at $e_1\w \cdots\w e_n$.

At the considered point,
\begin{equation}\aligned
\n_{e_i}e_{\a j}=&\n_{e_i}(e_1\w \cdots\w \nu_\a\w \cdots\w e_n)\\
                =&\sum_{k}e_1\w\cdots\w \n_{e_i}e_k\w\cdots \w \nu_\a\w\cdots\w e_n\\
                &+e_1\w \cdots\w \n_{e_i}\nu_\a\w \cdots\w e_n\\
                =&0.
                \endaligned
\end{equation}

Using Codazzi equations one can obtain
\begin{equation}\aligned\label{tension2}
\n_{e_i}h_{\a,ij}&=\n_{e_i}\lan B_{e_i e_j},\nu_\a\ran=\lan (\n_{e_i}B)_{e_ie_j},\nu_\a\ran\\
                 &=\lan (\n_{e_j}B)_{e_ie_i},\nu_\a\ran= \n_{e_j}H^\a
                 \endaligned
\end{equation}
with $H^\a:=\lan H,\nu_\a\ran$ the coefficients of the mean
curvature vector.

Combining with (\ref{tension1})-(\ref{tension2}) gives
\begin{equation}\label{tension3}\aligned
(\n_{e_i}d\g)e_i&=\n_{e_i}d\g(e_i)=(\n_{e_i}h_{\a,ij})e_{\a j}+h_{\a,ij} \n_{e_i}e_{\a j}\\
&=(\n_{e_j}H^\a) e_{\a j}.
\endaligned
\end{equation}

Since $\rho=e^{-\f{|X|^2}{4}}$,
\begin{equation}\label{tension4}\aligned
\n_{e_i}\rho&=-\f{1}{4}\rho\n_{e_i}|X|^2=-\f{1}{2}\rho\lan X,\bar{\n}_{e_i}X\ran\\
            &=-\f{1}{2}\rho\lan X,e_i\ran.
            \endaligned
\end{equation}
Let $X^\a:=\lan X,\nu_\a\ran=\lan X^N,\nu_\a\ran$, then
\begin{equation}\label{tension5}\aligned
\n_{e_j}X^\a&=\lan \bar{\n}_{e_j}X,\nu_\a\ran+\lan X,\bar{\n}_{e_j}\nu_\a\ran&=\lan e_j,\nu_\a\ran-\lan X,h_{\a,ij} e_i\ran\\
            &=-h_{\a,ij}\lan X,e_i\ran.
            \endaligned
\end{equation}

In conjunction with (\ref{tension1}), (\ref{tension3}), (\ref{tension4}) and (\ref{tension5}) we have
\begin{equation}\aligned
\tau_\rho(\g):=&\rho^{-1}\big(\n_{e_i}(\rho d\g)\big)e_i=\rho^{-1}(\n_{e_i}\rho)d\g(e_i)+\n_{e_i}(d\g)(e_i)\\
              =&\Big[-\f{1}{2}h_{\a,ij}\lan X,e_i\ran+\n_{e_j}H^\a\Big]e_{\a j}\\
              =& \n_{e_j}(H^\a+\f{1}{2}X^\a)e_{\a j}.
              \endaligned
              \end{equation}
Considering the definition of $\rho-$weighted harmonic map in the
appendix, the conclusion follows.

\end{proof}

\begin{cor}\label{wrv}
If $M$ is a self-shrinker in $\ir{m+n},$ then its Gauss map $\g:M\to
\grs{n}{m}$ is a $\rho-$weighted harmonic map.
\end{cor}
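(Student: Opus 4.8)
The plan is to deduce the statement immediately from Theorem \ref{RV}, observing that a self-shrinker is exactly the case in which the normal cross-section $H+\f12 X^N$ vanishes identically. First I would recall the defining equation \eqref{SS}, namely $H=-\f12 X^N$, which is equivalent to saying that $H+\f12 X^N=0$ as a section of the normal bundle $NM$. The zero section is of course parallel, $\n\big(H+\f12 X^N\big)=\n 0=0$, so the hypothesis of Theorem \ref{RV} is satisfied. Its conclusion then gives at once that the Gauss map $\g:M\to\grs{n}{m}$ is a $\rho$-weighted harmonic map.

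Rather than invoking the statement of Theorem \ref{RV} as a black box, one can also read the conclusion directly off the computation carried out in its proof. There the tension field was shown to be $\tau_\rho(\g)=\n_{e_j}\big(H^\a+\f12 X^\a\big)e_{\a j}$, where $H^\a+\f12 X^\a=\lan H+\f12 X^N,\nu_\a\ran$. For a self-shrinker each coefficient $H^\a+\f12 X^\a$ vanishes identically on $M$, hence every covariant derivative $\n_{e_j}\big(H^\a+\f12 X^\a\big)$ vanishes, and therefore $\tau_\rho(\g)=0$. By the definition of $\rho$-weighted harmonic map recorded in the appendix, this is precisely the assertion to be proved.

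There is essentially no obstacle to overcome here: the entire content is carried by Theorem \ref{RV}, and the corollary merely registers that the self-shrinker equation \eqref{SS} is the special (indeed extremal) instance of the parallelism condition, the one in which the parallel section is the zero section itself. The only point worth keeping in mind is that the parallelism in Theorem \ref{RV} is required in the normal bundle $NM$ equipped with its induced connection; the identically vanishing section trivially meets this requirement, so the passage from the hypothesis to the conclusion is immediate.
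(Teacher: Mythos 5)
Your proposal is correct and matches the paper's (implicit) argument exactly: the corollary follows from Theorem \ref{RV} because the self-shrinker equation \eqref{SS} says $H+\tfrac12 X^N=0$, and the zero section is trivially parallel in $NM$. The paper states the corollary without a separate proof for precisely this reason, so there is nothing to add.
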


Now we assume $F$ to be a $C^2$-function on $\grs{n}{m}$, then $f=F
\circ \g$ gives a $C^2$-function on $M$. We also choose a local
orthonormal frame field $\{e_i\}$ on $M$ such that $\n e_i=0$ and
$\n \nu_\a=0$ at the considered point. A straightforward calculation
shows
\begin{equation*}\aligned
\mc{L}f&=\rho^{-1}\text{div}(\rho\n f)=\rho^{-1}\n_{e_i}\big(\rho df(e_i)\big)\\
       &=\rho^{-1}\n_{e_i}\big(\rho dF(\g_* e_i)\big)=\rho^{-1}\n_{e_i}\big(dF(\rho \g_* e_i)\big)\\
       &=\rho^{-1}(\n_{e_i}(dF))(\rho \g_* e_i)+\rho^{-1}dF\Big(\big(\n_{e_i}(\rho d\g)\big)e_i\Big)\\
       &=\text{Hess}\ F(\g_* e_i,\g_* e_i)+dF\big(\tau_\rho(\g)\big)
       \endaligned
\end{equation*}
If $M$ is a self-shrinker, by Corollary \ref{wrv}, we have the
composition formula
\begin{equation}\label{com}
\mc{L}f=\text{Hess}\ F(\g_* e_i,\g_* e_i).
\end{equation}

\begin{rem} This composition formula shall play a key role in the
proof of rigidity theorems for self-shrinkers. Certainly, the above
formula could be obtained from the usual composition formula without
the notion of $\rho-$weighted harmonic maps. In fact, an extra term
in drift-Laplacian would be canceled by the tension field term.
\end{rem}

\bigskip

\Section{Rigidity results for hypersurfaces}{Rigidity results for
hypersurfaces}\label{hyp}

\medskip

Let $(\cdot,\cdot)$ be the canonical Euclidean inner product on $\R^{n+1}$, then for any fixed
$a\in S^n\subset \R^{n+1}$, $(\cdot,a)$ is obviously a smooth function on $S^n$.

By the theory of spherical geometry, the normal geodesic $\g$
starting from $x\in S^n$ and with the initial vector $v$  ($|v|=1$
and $(x,v)=0$) has the form
$$\g(t)=\cos t\ x+\sin t\ v.$$
Then
$$(\g(t),a)=\cos t\ (x,a)+\sin t\ (v,a).$$
Differentiating twice both sides of the above equation with respect to $t$ implies
$$\Hess(\cdot,a)(v,v)=-(\cdot,a).$$
In conjunction with the formula $2\Hess\ h(v,w)=\Hess\
h(v+w,v+w)-\Hess\ h(v,v)-\Hess\ h(w,w)$, it is easy to obtain
\begin{equation}\label{hess1}
\Hess(\cdot,a)=-(\cdot,a)\ g_s
\end{equation}
with $g_s$ the canonical metric on $S^n$.

Denote
\begin{equation}
F=1-(\cdot,a),
\end{equation}
then
\begin{equation}\label{hess2}
\Hess\ F=(1-F)g_s.
\end{equation}

If $M$ is a self-shrinker in $\R^{n+1}$, we put $f=F\circ \g$, then combining the composition formula
(\ref{com}) with (\ref{hess2}) yields
\begin{equation}\label{L1}\aligned
\mc{L}f&=\Hess\ F(\g_* e_i,\g_* e_i)=(1-f)\lan \g_* e_i,\g_* e_i\ran\\
       &=(1-f)|B|^2.
       \endaligned
       \end{equation}
Note that $f<1$ ($f\leq 1$) equals to say that the Gauss image of $M$ is contained in the open (closed) hemisphere centered at
$a$.

(\ref{L1}) is equivalent to
\begin{equation}\label{L2}
(1-f)|B|^2\rho=\text{div}(\rho\n f).
\end{equation}
Let $\phi$ be a smooth function on $M$ with compact supporting set.
Multiplying $\phi^2 f$ with both sides of (\ref{L2}) and then
integrating by parts imply
$$\aligned
\int_M \phi^2 f(1-f)|B|^2\rho&=\int_M \phi^2 f\text{div}(\rho\n f)\\
                             &=\int_M \text{div}(\phi^2 f\rho\n f)-\int_M \lan \n(\phi^2 f),\n f\ran\rho\\
                             &=-\int_M \phi^2|\n f|^2\rho-2\int_M \lan f\n\phi,\phi\n f\ran\rho\\
                             &\leq -\int_M \phi^2|\n f|^2\rho+\f{1}{2}\int_M\phi^2|\n f|^2\rho+2\int_M |\n\phi|^2f^2\rho\\
                             &=-\f{1}{2}\int_M \phi^2|\n f|^2\rho+2\int_M |\n \phi|^2f^2\rho.
                             \endaligned$$
i.e
\begin{equation}\label{st1}
\int_M \phi^2f(1-f)|B|^2\rho+\f{1}{2}\int_M \phi^2|\n f|^2\rho\leq 2\int_M |\n \phi|^2 f^2\rho.
\end{equation}
The above 'generalized stability inequality' enables us to obtain
the following rigidity theorem.
\bigskip
\begin{thm}\label{Ri1}
Let $M$ be a complete self-shrinker hypersurface properly immersed
in $\ir{n+1}.$ If the image under the Gauss map is contained in an
open hemisphere, then $M$ has to be a hyperplane. If the image under
the Gauss map is contained in a closed hemisphere, then $M$  is a
hyperplane or a cylinder whose cross section is an
$(n-1)$-dimensional self-shrinker in $\R^n$.
\end{thm}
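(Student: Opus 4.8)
The plan is to exploit the generalized stability inequality (\ref{st1}) together with the finite weighted volume of a properly immersed self-shrinker. Since $M$ is properly immersed, by Ding-Xin \cite{DX} it has Euclidean volume growth, and consequently the Gaussian weighted volume is finite, $\int_M\rho<\infty$. In both hypotheses the Gauss image lies in a (closed or open) hemisphere centered at $a$, so $0\le(\g,a)\le 1$, hence $0\le f\le 1$, with $f<1$ in the open case; in particular $f(1-f)\ge 0$ and $f^2\le 1$ pointwise, so both terms on the left of (\ref{st1}) are nonnegative.

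First I would insert into (\ref{st1}) a cutoff $\phi=\phi_R$ that is a function of the extrinsic distance $|X|$, equal to $1$ on $\{|X|\le R\}$, vanishing on $\{|X|\ge 2R\}$, and with $|\phi_R'|\le C/R$. Because $|\n|X||\le 1$ on $M$ we get $|\n\phi_R|\le C/R$, so the right-hand side is bounded by $\f{2C^2}{R^2}\int_M f^2\rho\le \f{2C^2}{R^2}\int_M\rho\to 0$ as $R\to\infty$. Letting $R\to\infty$ therefore forces
\begin{equation*}
\int_M f(1-f)|B|^2\rho=0,\qquad \int_M|\n f|^2\rho=0 .
\end{equation*}
As $\rho>0$, this yields $\n f\equiv 0$, so (assuming $M$ connected) $f$ is a constant, and $f(1-f)|B|^2\equiv 0$.

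Now I would run the case analysis. If $f\equiv 0$ then $(\g,a)\equiv 1$, so the Gauss map is the constant $a$, the shape operator vanishes, and the complete hypersurface $M$ is a hyperplane. If $f$ is a constant in $(0,1)$, then $f(1-f)>0$ forces $|B|^2\equiv 0$, so again $M$ is totally geodesic, hence a hyperplane. Under the open hemisphere hypothesis $f<1$ excludes any further possibility, which proves the first assertion. Under the closed hemisphere hypothesis the remaining case is $f\equiv 1$, i.e. $(\g,a)\equiv 0$: the unit normal is everywhere orthogonal to the fixed vector $a$, equivalently $a$ is tangent to $M$ at every point.

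It remains to identify the geometry when $a$ is everywhere tangent, which I expect to be the main point. The integral curves of the constant field $a$ are then straight lines contained in $M$, so (using that the properly immersed $M$ is closed) $M$ is invariant under translation by $ta$ and splits as a product $M=\Sigma\times\R a$, where $\Sigma=M\cap a^\perp$ is an $(n-1)$-dimensional complete hypersurface in $a^\perp\cong\R^n$. Writing $X=X'+ta$ with $X'\in a^\perp$ and noting $\g\in a^\perp$, one has $X^N=(X')^N$ and the mean curvature of the cylinder equals that of $\Sigma$ in $a^\perp$; hence the self-shrinker equation $H=-\f{X^N}{2}$ descends to $H_\Sigma=-\f{(X')^N}{2}$, showing $\Sigma$ is a self-shrinker in $\R^n$. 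Thus $M$ is a cylinder over an $(n-1)$-dimensional self-shrinker, completing the closed hemisphere case. The main obstacle is this last structural step---verifying that the tangency of the parallel field $a$ integrates to a genuine metric splitting and that the cross-section again satisfies the shrinker equation---whereas the analytic heart of the argument is the decay of the right-hand side of (\ref{st1}), which rests entirely on the finiteness of $\int_M\rho$.
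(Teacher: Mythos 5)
Your proposal is correct and follows essentially the same route as the paper: the stability inequality (\ref{st1}) with a cutoff, the Euclidean volume growth of a proper self-shrinker from \cite{DX} to kill the right-hand side (the paper bounds the annulus term by $e^{-R^2/4}\mathrm{Vol}(D_{2R}\setminus D_R)$ rather than invoking finiteness of $\int_M\rho$, but these are equivalent uses of the same fact), the trichotomy $f\equiv 0$, $f\equiv t_0\in(0,1)$, $f\equiv 1$, and in the last case the ODE/integral-curve argument showing the tangent parallel field $a$ integrates to straight lines so that $M$ splits as a cylinder over a lower-dimensional self-shrinker.
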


\begin{proof}
In (\ref{st1}), we put $\phi$ to be a cut-off function with $\phi\equiv 1$ on $D_R$ (the intersection of the Euclidean ball
of radius $R$ and $M$), $\phi\equiv 0$ outside $D_{2R}$ and $|\n \phi|\leq \f{c_0}{R}$ with a positive constant $c_0$.
Noting that $0\leq f\leq 1$ under the Gauss image assumptions, we have
$$\aligned
\f{1}{2}\int_{D_R}|\n f|^2\rho\leq &\f{1}{2}\int_M \phi^2|\n f|^2\rho\leq 2\int_M |\n\phi|^2f^2\rho\\
\leq &\f{2c_0^2}{R^2}\int_{D_{2R}\backslash D_R}f^2\rho\leq \f{2c_0^2}{R^2}e^{-\f{R^2}{4}}\text{Vol}(D_{2R}\backslash D_R).
\endaligned$$
Since $M$ has Euclidean volume growth by a result in \cite{DX},
letting $R\ra +\infty$ we get
$$\int_M |\n f|^2\rho=0.$$
Hence $\n f\equiv 0$ and $f\equiv \text{const}$.

If $f\equiv 0$, then the Gauss image of $M$ is a single point, which implies $M$ is a hyperplane. If
$f\equiv t_0$ with $t_0\in (0,1)$, then again using (\ref{st1}) gives
$$\aligned
t_0(1-t_0)\int_{D_R}|B|^2\rho&\leq \int_M \phi^2 f(1-f)|B|^2\rho\\
                             &\leq 2\int_M |\n \phi|^2f^2\rho\leq \f{2c_0^2}{R^2}e^{-\f{R^2}{4}}\text{Vol}(D_{2R}\backslash D_R).
                             \endaligned$$
Letting $R\ra +\infty$ forces $|B|^2\equiv 0$, thus $M$ has to be a hyperplane.

If $f\equiv 1$, then the Gauss image of $M$ is contained in a subsphere of codimension 1. Without loss of generality one can assume
$\g(M)\subset \{x=(x_1,\cdots,x_{n+1})\in S^n:x_{n+1}=0\}$. Then for any $p\in M$, $\ep_{n+1}\in T_p M$. (Here and in the sequel,
$\ep_i$ ($1\leq i\leq n+1$) is a vector in $\R^{n+1}$ whose $i$-th coordinate is 1
and other coordinates are all 0.) In other words, $Y:=\ep_{n+1}$ is a tangent vector field on $M$. Let $\xi:(-\varepsilon,\varepsilon)
\ra M$ be a curve on $M$ satisfying the following ODE system
\begin{equation}\left\{\begin{split}
\dot{\xi}(t)&=Y\big(\xi(t)\big),\\
\xi(0)&=X(p).
\end{split}\right.
\end{equation}
Then $|Y|\equiv 1$ and the completeness of $M$ implies $\xi$ can be
infinitely extended towards both ends. Noting that $Y$ can be viewed
as a vector field on $\R^{n+1}$, we put $\ze:\R\ra \R^{n+1}$ to be a
curve satisfying
\begin{equation}\left\{\begin{split}
\dot{\ze}(t)&=Y\big(\ze(t)\big),\\
\ze(0)&=X(p).
\end{split}\right.
\end{equation}
Then obviously $\ze$ is the straight line going through $X(p)$ and
being perpendicular to hyperplane $\{X\in
\R^{n+1}:(X,\ep_{n+1})=0\}$. By the uniqueness of ODE system with
the given initial conditions we have $\xi(t)=\ze(t)$. Thus $M$
constitutes of straight line orthogonal to a fixed hyperplane. More
precisely, if we denote $\td{M}=M\cap \{X\in
\R^{n+1}:(X,\ep_{n+1})=0\}$, then $\td{M}$ is obviously a
self-shrinker in $\R^n$ and $M=\td{M}\times \R$.

\end{proof}

Let $(\varphi,\th)$ be the geographic coordinate of $S^2$. More precisely, there is a covering mapping $\chi:(-\f{\pi}{2},\f{\pi}{2})
\times \R\ra S^2\backslash\{N,S\}$
$$(\varphi,\th)\mapsto (\cos\varphi\cos\th,\cos\varphi\sin\th,\sin\varphi).$$
Here $N$ and $S$ are the north pole and the south pole, $\varphi$
and $\th$ are the latitude and the longitude, respectively. Note
that each level set of $\th$ is a meridian, i.e. a half of great
circle connecting the north pole and the south pole. Although $\chi$
is not one-to-one, the restriction of $\chi$ on
$(-\f{\pi}{2},\f{\pi}{2})\times (-\pi,\pi)$ is a bijective mapping
to the open domain $\Bbb{V}$ that is obtained by deleting the
International date line from $S^2$.

The longitude function $\th$ and $\Bbb{V}$ can be generalized to higher dimensional spheres.

Let $p:\R^{n+1}\ra \R^2$
$$x=(x_1,\cdots,x_{n+1})\mapsto (x_1,x_2)$$
be a natural orthogonal projection, then $p$ maps $S^n$ onto the closed unit disk $\bar{\Bbb{D}}$.
Denote
\begin{equation}
V=\bar{\Bbb{D}}\backslash \{(a,0):-1\leq a\leq 0\}
\end{equation}
and
\begin{equation}
\Bbb{V}=p^{-1}(V)\cap S^n.
\end{equation}
Then it is easily-seen that $S^n\backslash \Bbb{V}$ is a closed
hemisphere of codimension 1. So we also write $\Bbb{V}=S^n\backslash
\overline{S}_+^{n-1}$ in the following text. It is shown in
\cite{Go} \cite{J-X-Y1} that $\Bbb{V}$ is a convex supporting subset
in $S^n$, i.e. any compact subset $K\subset \Bbb{V}$ admits a
strictly convex function on it.

Obviously there is a $(0,1]$-valued function $r$ and a $(-\pi,\pi)$-valued function $\th$
on $\Bbb{V}$, such that
\begin{equation}\label{th}
p(x)=(x_1,x_2)=(r\cos\th, r\sin\th)\qquad \text{for all }x\in \Bbb{V}.
\end{equation}

$\{x_i:1\leq i\leq n+1\}$ can be viewed as coordinate functions on $S^n$, and $x_i=(x,\ep_i)$.
By (\ref{hess1}),
\begin{equation}\label{hessx}
\Hess\ x_i=-x_i\ g_s\qquad \text{for every }1\leq i\leq n+1.
\end{equation}
From (\ref{th}), $r^2=x_1^2+x_2^2$, hence
\begin{equation}\label{hessr1}\aligned
\Hess\ r^2=&2x_1\Hess\ x_1+2x_2\Hess\ x_2+2dx_1\otimes dx_1+2dx_2\otimes dx_2\\
          =&-2x_1^2\ g_s-2x_2^2\ g_s+2(\cos\th\ dr-r\sin\th\ d\th)\otimes(\cos\th\ dr-r\sin\th\ d\th)\\
           &+2(\sin\th\ dr+r\cos\th\ d\th)\otimes (\sin\th\ dr+r\cos\th\ d\th)\\
          =&-2r^2\ g_s+2dr\otimes dr+2r^2d\th\otimes d\th.
          \endaligned
\end{equation}
On the other hand,
\begin{equation}\label{hessr2}
\Hess\ r^2=2r\Hess\ r+2dr\otimes dr.
\end{equation}
(\ref{hessr1}) and (\ref{hessr2}) implies
\begin{equation}\label{hessr}
\Hess\ r=-r\ g_s+rd\th\otimes d\th.
\end{equation}
Furthermore (\ref{hessx}), (\ref{th}) and (\ref{hessr}) yield
$$\aligned
-x_1\ g_s&=\Hess\ x_1\\
       &=\cos\th\ \Hess\ r-r\sin\th\ \Hess\ \th-r\cos\th\ d\th\otimes d\th-\sin\th(dr\otimes d\th+d\th\otimes dr)\\
       &=-x_1\ g_s+x_1\ d\th\otimes d\th-r\sin\th\ \Hess\ \th-x_1\ d\th\otimes d\th-\sin\th(dr\otimes d\th+d\th\otimes dr)\\
       &=-x_1\ g_s-r\sin\th\ \Hess\ \th-\sin\th(dr\otimes d\th+d\th\otimes dr).
\endaligned$$
i.e.
$$r\sin\th\ \Hess\ \th=-\sin\th(dr\otimes d\th+d\th\otimes dr).$$
Similarly computing $\Hess\ x_2$ with the aid of (\ref{th}) and (\ref{hessr}) gives
$$r\cos\th\ \Hess\ \th=-\cos\th(dr\otimes d\th+d\th\otimes dr).$$
Therefore
\begin{equation}
\Hess\ \th=-r^{-1}(dr\otimes d\th+d\th\otimes dr).
\end{equation}
It tells us $\Hess\ \th(v,v)=0$ for any vector $v$ on $\Bbb{V}$
satisfying $\th(v)=0$; i.e. the level sets of $\th$ are all totally
geodesic hypersurfaces in $S^n$. In fact, $\Bbb{V}=S^n\backslash
\overline{S}_+^{n-1}$ has so-called warped product structure, see
\cite{So}.

If the Gauss image of $M$ is contained in $\Bbb{V}$, using
composition formula we obtain
$$\aligned\mc{L}(\th\circ \g)&=\Hess\ \th(\g_*e_i,\g_*e_i)\\
       &=-(r\circ \g)^{-1}(dr\otimes d\th+d\th\otimes dr)(\g_*e_i,\g_* e_i)\\
       &=-2(r\circ \g)^{-1}\lan \n(r\circ \g),\n(\th\circ\g)\ran.
       \endaligned$$
In the following text, $\th\circ \g$ and $r\circ \g$ are also denoted by $\th$ and $r$,
if there is no ambiguity from the context. Then the above equality can be rewritten as
\begin{equation}
\mc{L}(\th)=-2r^{-1}\lan \n r,\n \th\ran.
\end{equation}
i.e.
$$\rho^{-1}\text{div}(\rho\n\th)=-2r^{-1}\lan \n r,\n\th\ran.$$
Hence
\begin{equation}\label{div}
\aligned
\text{div}(r^2\rho\n \th)&=r^2\text{div}(\rho\n \th)+\lan \n r^2,\rho\n\th\ran\\
&=-2\lan r\n r,\rho\n \th\ran+\lan \n r^2,\rho\n\th\ran=0
\endaligned
\end{equation}
With the aid of the above formula one can improve the rigidity result in Theorem \ref{Ri1}.

\begin{thm}\label{Ri2}
Let $M^n$ be a complete self-shrinker hypersurface properly immersed
in $\R^{n+1}.$ If the image under Gauss map is contained in
$S^n\backslash \overline{S}_{+}^{n-1}$, then $M$ has to be a
hyperplane.
\end{thm}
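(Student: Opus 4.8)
The plan is to exploit the conservation law \eqref{div}, $\text{div}(r^2\rho\n\th)=0$, in order to show first that the longitude $\th\circ\g$ (again written $\th$, with $r$ for $r\circ\g$) is constant on $M$. First I would test \eqref{div} against $\phi^2\th$, where $\phi$ is the cut-off function already used in the proof of Theorem \ref{Ri1} ($\phi\equiv1$ on $D_R$, $\phi\equiv0$ outside $D_{2R}$, $|\n\phi|\leq c_0/R$), and integrate by parts using the compact support of $\phi$:
$$0=\int_M\phi^2\th\,\text{div}(r^2\rho\n\th)=-\int_M\lan\n(\phi^2\th),r^2\rho\n\th\ran=-\int_M r^2\phi^2|\n\th|^2\rho-2\int_M\th\phi r^2\lan\n\phi,\n\th\ran\rho.$$

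Next I would absorb the cross term with Young's inequality, $2|\th\phi r^2\lan\n\phi,\n\th\ran|\leq\f12 r^2\phi^2|\n\th|^2+2\th^2 r^2|\n\phi|^2$, obtaining
$$\f12\int_M r^2\phi^2|\n\th|^2\rho\leq 2\int_M\th^2 r^2|\n\phi|^2\rho.$$
The decisive point---and the reason the omitted set shrinks all the way down to $\overline{S}_+^{n-1}$---is that on $\Bbb{V}$ both $\th$ and $r$ are \emph{bounded}: $|\th|<\pi$ and $0<r\leq1$. Hence the right-hand side is at most $2\pi^2\int_M|\n\phi|^2\rho\leq 2\pi^2c_0^2R^{-2}e^{-R^2/4}\V(D_{2R}\setminus D_R)$, which tends to $0$ as $R\to\infty$ since $M$ has Euclidean volume growth by \cite{DX}. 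Letting $R\to\infty$ forces $\int_M r^2|\n\th|^2\rho=0$; as $r>0$ and $\rho>0$ throughout $\Bbb{V}$, this yields $\n\th\equiv0$, so $\th\equiv\th_0$ is constant.

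It then remains to upgrade ``the Gauss image lies in a single meridian'' to ``$M$ is a hyperplane''. Since $\th\equiv\th_0$, from \eqref{th} the Gauss image lies in the meridian $\{x\in\Bbb{V}:(x_1,x_2)=(r\cos\th_0,r\sin\th_0),\ r>0\}$. Setting $a=(\cos\th_0,\sin\th_0,0,\dots,0)\in S^n$, one checks along this meridian that $(x,a)=x_1\cos\th_0+x_2\sin\th_0=r>0$, so the Gauss image actually lies in the \emph{open} hemisphere centered at $a$. I would then invoke the first part of Theorem \ref{Ri1} to conclude that $M$ is a hyperplane.

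I expect the main obstacle to be the analytic first step---choosing the test function so that the bounded factors $\th$ and $r$ land on the error side while $|\n\th|^2$ is retained on the good side; the divergence form \eqref{div} is engineered precisely for this. One subtlety to watch is that $\th\circ\g$ must be a genuinely smooth, bounded function on all of $M$, which holds exactly because the Gauss image avoids $\overline{S}_+^{n-1}$ and hence stays inside the coordinate patch $\Bbb{V}$ where $r>0$ and $\th\in(-\pi,\pi)$. Given Theorem \ref{Ri1}, the geometric reduction in the previous paragraph is then routine.
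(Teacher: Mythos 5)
Your proposal is correct and follows essentially the same route as the paper's own proof: testing $\text{div}(r^2\rho\nabla\theta)=0$ against $\phi^2\theta$, absorbing the cross term, using the boundedness of $\theta$ and $r$ on $\Bbb{V}$ together with the Euclidean volume growth from \cite{DX} to force $\nabla\theta\equiv 0$, and then reducing to the open-hemisphere case of Theorem \ref{Ri1} via the vector $a=(\cos\theta_0,\sin\theta_0,0,\dots,0)$. No substantive differences.
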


\begin{proof}
Let $\phi$ be a smooth function on $M$ with compact supporting set,
multiplying $\phi^2\th$ with both sides of (\ref{div}) and then
integrating by parts give
\begin{equation*}\aligned
0&=\int_M \phi^2\th\text{div}( r^2\rho\n\th)\\
&=\int_M \text{div}(\phi^2\th r^2\rho\n \th)-\int_M\lan \n(\phi^2\th),\n\th\ran r^2\rho\\
&=-\int_M \phi^2|\n \th|^2r^2\rho-2\int_M\lan \th\n\phi,\phi\n\th\ran r^2\rho\\
&\leq -\f{1}{2}\int_M \phi^2|\n \th|^2r^2\rho+2\int_M |\n\phi|^2\th^2r^2\rho
\endaligned
\end{equation*}
i.e.
\begin{equation}
\int_M \phi^2|\n \th|^2r^2\rho\leq 4\int_M |\n\phi|^2\th^2r^2\rho.
\end{equation}
Choosing $\phi$ to be a cut-off function, which satisfies
$\phi\equiv 1$ on $D_R$, $\phi\equiv 0$ outside $D_{2R}$ and $|\n
\phi|\leq \f{c_0}{R}$, then combining with $\th\in (-\pi,\pi)$ and
$r\in (0,1]$ we have
\begin{equation}\aligned
\int_{D_R}|\n \th|^2r\rho&\leq \int_M \phi^2|\n \th|^2 r^2\rho\leq 4\int_M |\n \phi|^2\th^2 r^2\rho\\
   &\leq \f{4\pi^2 c_0^2}{R^2}e^{-\f{R^2}{4}}\text{Vol}(D_{2R}\backslash D_R).
   \endaligned
   \end{equation}
By letting $R\ra +\infty$ we arrive at $|\n \th|\equiv 0$. Hence $\th\equiv \th_0\in (-\pi,\pi).$

Denote $a_0=(\cos\th_0,\sin\th_0,0,\cdots,0)$, then for arbitrary $p\in M$,
$$(\g(p),a_0)=r(p)(\cos^2\th_0+\sin^2\th_0)=r(p)>0.$$
(Note that $\g(p)=(r(p)\cos\th(p),r(p)\sin\th(p),\cdots)$.) It implies the Gauss image of $M$ is contained
in an open hemisphere centered at $a_0$, and then the final conclusion immediately follows from Theorem \ref{Ri1}.

\end{proof}

\begin{rem}
It is shown in \cite{J-X-Y1} that even we add a point to
$S^n\backslash \overline{S}_{+}^{n-1}$, it will contain a great
circle. Hence the nontrivial self-shrinker $S^1\times
\R^{n-1}\subset \R^{n+1}$ whose Gauss image is just a great circle
tells us that the Gauss image restriction in Theorem \ref{Ri2} is
optimal.
\end{rem}

\bigskip

\Section{Rigidity results in High Codimension}{Rigidity results
in High Codimension}

\medskip

Via Pl\"{u}cker embedding, Grassmannian manifold $\grs{n}{m}$ can be
viewed as a minimal submanifold in a higher dimensional Euclidean
sphere. The restriction of the Euclidean inner product on $\grs{n}{m}$ is denoted by
$w:\grs{n}{m}\times \grs{n}{m}\ra \R$
\begin{equation}w(P,Q)=\lan e_1\w\cdots\w e_n,f_1\w\cdots\w f_n\ran=\det W.
\end{equation}
Here $\{e_1,\cdots,e_n\}$ and $\{f_1,\cdots,f_n\}$ are oriented orthonormal basis of
$P$ and $Q$, respectively, and $W:=\big(\lan e_i,f_j\ran\big)$. The eigenvalues of $W^T W$ are denoted
by $\mu_1^2,\cdots,\mu_n^2$, then $\mu_i$ takes value between $0$ and $1$. We also note that $\mu_i^2$
can be expressed as
\begin{equation}
\mu_i^2=\f{1}{1+\la_i^2}
\end{equation}
with $\la_i\in [0,+\infty]$.

The Jordan angles between $P$ and $Q$ are critical values of the angle $\th$ between a nonzero vector in $P$ and its
orthogonal projection in $Q$. A direct calculation shows there are $n$ Jordan angles $\th_1,\cdots,\th_n$,
with
\begin{equation}
\th_i=\arccos \mu_i.
\end{equation}
Hence
\begin{equation}
|w|=\big(\det(W^T W)\big)^{\f{1}{2}}=\prod_{i=1}^n \mu_i=\prod_{i=1}^n \cos\th_i.
\end{equation}

Fix $P_0\in \grs{n}{m}$ spanned by $\ep_1,\cdots,\ep_n$, which are complemented by
$\ep_{n+1},\cdots,\ep_{n+m}$, such that $\{\ep_1,\cdots,\ep_{n+m}\}$ is an orthonormal basis of
$\R^{m+n}$. Define
\begin{equation}
\Bbb{U}:=\{P\in \grs{n}{m}:w(P,P_0)>0\}.
\end{equation}
Our interested quantity will be
\begin{equation}
v(\cdot,P_0):=w^{-1}(\cdot,P_0)\qquad \text{on }\Bbb{U}.
\end{equation}
Then it is easily-seen that
\begin{equation}\label{v1}
v(P,P_0)=\prod_i \sec\th_i=\prod_i \mu_i^{-1}=\prod_i \sqrt{1+\la_i^2}.
\end{equation}

In this terminology, Hess$(v(\cdot, P_0))$ has been estimated in
\cite{X-Y1}. By (3.8) in \cite{X-Y1}, we have
\begin{eqnarray}\label{He}\aligned
\Hess(v(\cdot,P_0))&=\sum_{j\neq \a}v\ \om_{j\a}^2+\sum_{1\leq j\leq p}
(1+2\la_j^2)v\ \om_{jj}^2
+\sum_{1\leq j,k\leq p, j\neq k} \la_j\la_k v(\om_{jj}\otimes \om_{kk}+\om_{jk}\otimes\om_{kj})\\
&=\sum_{\max\{j,\a\}>p}v\
\om_{j\a}^2+\sum_{1\leq j\leq p}(1+2\la_j^2)v\ \om_{jj}^2
                                          +\sum_{1\leq j,k\leq p,j\neq k}\la_j\la_k v\ \om_{jj}\otimes\om_{kk}\\
&\qquad\qquad+\sum_{1\leq j<k\leq p}\Big[(1+\la_j\la_k)v\Big(\f{\sqrt{2}}{2}(\om_{jk}
+\om_{kj})\Big)^2\\
&\hskip2in+(1-\la_j\la_k)v\Big(\f{\sqrt{2}}{2}(\om_{jk}-\om_{kj})\Big)^2\Big]
\endaligned
\end{eqnarray}
with $p:=\min\{m,n\}$ and $\{\om_{i\a}:1\leq i\leq n,1\leq \a\leq
m\}$ is a dual basis of $\{e_{\a i}:1\leq i\leq n,1\leq \a\leq m\},$
namely, $\{\om_{i\a}:1\leq i\leq n,1\leq \a\leq m\}$ is a local
orthonormal coframe field on $\grs{n}{m}$ at $P=e_1\w\cdots\w e_n.$

We also have from (3.9) in \cite{X-Y1} that

\begin{equation}\label{dv}
dv(\cdot, P_0) =\sum_{1\leq j\leq p}\la_j\,v(\cdot, P_0)\om_{jj}
\end{equation}
i.e.
\begin{equation}\label{dv2}
d\log v(\cdot,P_0)=\sum_{1\leq j\leq p}\la_j \om_{jj}.
\end{equation}
Combining with (\ref{He}) and (\ref{dv2}) gives
\begin{equation}
\Hess \log v(\cdot,P_0)=g+\sum_{1\leq j\leq p}\la_j^2\om_{jj}^2
+\sum_{1\leq j,k\leq p,j\neq k}\la_j\la_k\om_{jk}\otimes \om_{kj},
\end{equation}
where $g$ is the metric tensor on $\grs{n}{m}.$

Let
\begin{equation}\label{v2}
v:=v(\cdot,P_0)\circ \g.
\end{equation}
 By (\ref{tension1}),
$$\om_{jk}(\g_* e_i)=\om_{jk}(h_{\a,il}e_{\a l})=h_{k,ij}.$$
Hence
\begin{equation}\aligned\label{dw3}
|\n \log v|^2&=\sum_i \Big[d\log v(\cdot,P_0)(\g_* e_i)\Big]^2\\
&=\sum_i \Big[\sum_j  \la_j\om_{jj}(\g_* e_i)\Big]^2\\
&=\sum_i\Big(\sum_j \la_j h_{j,ij}\Big)^2.
\endaligned
\end{equation}
Using composition formula (\ref{com}) yields
\begin{equation}\label{La3}\aligned
\mc{L}(\log v)&=\Hess \log v(\cdot,P_0)(\g_* e_i,\g_* e_i)\\
            &=|B|^2+\sum_{i, 1\leq j\leq p}\la_j^2(\om_{jj}(\g_*e_i))^2
            +\sum_{i, 1\leq j,k\leq p,j\neq k}\la_j\la_k \om_{jk}(\g_* e_i)\om_{kj}(\g_* e_i)\\
            &=|B|^2+\sum_{i, 1\leq j\leq p}\la_j^2h_{j,ij}^2+\sum_{i,1\leq j,k\leq p,j\neq k}\la_j\la_k h_{k,ij}h_{j,ik}.
            \endaligned
\end{equation}

\begin{rem}
As shown in \cite{X-Y1}, if $M$ is a graph generated by a vector-valued function $u:\Om\subset \R^n\ra \R^m$, then the function $v$ defined in
(\ref{v2}) is just the slope of $u$. Hence the two definitions of $v$ given in (\ref{v}) and (\ref{v2}) are equivalent.
\end{rem}

\begin{pro}\label{subhar3}
There exists a positive constant $C_1$. If $M$ is a self-shrinker in $\R^{n+m}$ and $v<3$ on $M$,
then
\begin{equation}\label{La4}
\mc{L} (\log v)+C_1|\n \log v|^2\geq \f{1}{2}(3-v)|B|^2.
\end{equation}
\end{pro}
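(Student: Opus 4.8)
The plan is to prove the inequality pointwise, starting from the two facts already in hand: the composition formula (\ref{La3}),
\[
\mc{L}(\log v)=|B|^2+\sum_{i,\,1\le j\le p}\la_j^2 h_{j,ij}^2+\sum_{i,\,1\le j,k\le p,\,j\ne k}\la_j\la_k h_{k,ij}h_{j,ik},
\]
and the gradient identity (\ref{dw3}), $|\n\log v|^2=\sum_i\big(\sum_j\la_j h_{j,ij}\big)^2$. Since $|B|^2\ge0$ and the diagonal sum $\sum_{i,j}\la_j^2h_{j,ij}^2\ge0$, the only obstruction to (\ref{La4}) is the indefinite cross term. Thus the statement reduces to showing that this cross term, together with the nonnegative diagonal sum and the auxiliary quantity $C_1|\n\log v|^2$, dominates $\f{1-v}{2}|B|^2$; equivalently I must bound $\sum_{i,\,j\ne k}\la_j\la_k h_{k,ij}h_{j,ik}$ from below by $\f{1-v}{2}|B|^2$ modulo those nonnegative terms.

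First I would record the arithmetic consequences of the hypothesis $v<3$. From $v^2=\prod_j(1+\la_j^2)$ with each factor at least $1$ one gets $1+\sum_j\la_j^2\le v^2<9$, hence $\sum_j\la_j^2<8$; and for any pair $j\ne k$, $(1+\la_j^2)(1+\la_k^2)\le v^2<9$, whence $\la_j^2\la_k^2<8$ and $\la_j\la_k<2\sqrt2$. These inequalities are the only place the number $3$ enters, and they are what ultimately produce the factor $\f12(3-v)$.

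Then, following the method of Theorem 3.1 in \cite{J-X-Y}, I would split the cross term according to the position of the tangent index $i$ relative to the normal-type indices $j,k$. The terms with $i\in\{j,k\}$ involve the diagonal components $h_{j,ij}$ that build up $|\n\log v|^2$; these are precisely the pieces to be absorbed by $C_1|\n\log v|^2$ after a Cauchy–Schwarz step. The terms with $i,j,k$ mutually distinct are genuinely off-diagonal: here I would pair $(j,k)$ with $(k,j)$, use the symmetry $h_{k,ij}=h_{k,ji}$ together with Young's inequality $2ab\ge-(a^2+b^2)$, and invoke the bounds on $\la_j\la_k$ from $v<3$ to dominate them by a controlled multiple of $|B|^2=\sum_{i,j,\a}h_{\a,ij}^2$. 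Retaining the nonnegative diagonal sum on the favorable side throughout is essential, since discarding it is too wasteful.

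The main obstacle is exactly this cross-term estimate. A naive application of Young's inequality to $\sum\la_j\la_k h_{k,ij}h_{j,ik}$ yields a coefficient of $|B|^2$ far worse than $\f{1-v}{2}$, so the delicate point is the bookkeeping that preserves the cancellation between the symmetric positive pieces and the indefinite ones; this is the technical content carried over from \cite{J-X-Y}. Once the two groups of terms are controlled, I would collect constants, choose $C_1$ large enough (depending only on the universal bounds coming from $v<3$) to absorb the diagonal-type remainder, and check that the surviving coefficient of $|B|^2$ is at least $\f12(3-v)$, which gives (\ref{La4}).
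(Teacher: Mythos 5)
Your outline follows the same general strategy as the paper (the index-type decomposition borrowed from Theorem 3.1 of \cite{J-X-Y}, with the cross term split according to whether the tangent index $i$ coincides with one of $j,k$), and your reduction to bounding the cross term below by $\f{1-v}{2}|B|^2$ modulo the nonnegative pieces is arithmetically correct. But the proposal stops exactly where the real difficulty begins, and the tools you name for that step will not close it.

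The genuinely hard case is the group of terms with $i\le p$ involving $h_{i,ii}$, $h_{j,ij}$, $h_{i,jj}$ (the paper's $IV_i$). If you apply Young's inequality to $2\la_i\la_j h_{i,jj}h_{j,ij}$ while preserving a net coefficient of at least $\f12(3-v)$ on $h_{i,jj}^2$, the Young parameter is forced to be $\tau^{-1}=\big(\f{v-1}{2}\big)^{-1}$, and the resulting coefficient on $h_{j,ij}^2$ is $2\tau+\la_j^2-\tau^{-1}\la_i^2\la_j^2$, which \emph{can be negative} (take $\la_i$ large and $\la_j$ moderate, still compatible with $\prod_j(1+\la_j^2)=v^2<9$). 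Your universal bounds $\sum_j\la_j^2<8$ and $\la_j\la_k<2\sqrt2$ do not prevent this, and no amount of ``choosing $C_1$ large'' helps unless one shows that the quadratic form $a^{-1}s^2+b\,h_{k,ik}^2+C_1(s+\la_k h_{k,ik})^2$ is nonnegative for a $C_1$ \emph{uniform} in $v\in(1,3)$ and in the eigenvalue configuration. The paper's proof of this occupies the bulk of the argument: one first shows via the constraint $\prod_j(1+\la_j^2)=v^2$ that at most one index $k\ne i$ can make the coefficient negative, then reduces the uniformity claim to showing that a two-variable rational function $F(r,t)$ is bounded above by $-\de_0$ on the constrained region, and finally establishes $\de_0=\f1{16}$ (hence $C_1=16$) by an explicit optimization ($H_1\ge1$, $H_2\le4$, $F_2\ge\f14$, $|F_2-F_1|\le2$). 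None of this is present in, or implied by, your sketch; deferring it to ``the technical content carried over from \cite{J-X-Y}'' is not sufficient either, since that reference treats the minimal (harmonic Gauss map) case and the paper explicitly reworks the computation to get a constant independent of $v$ for the weighted operator $\mc{L}$. As written, the proposal has a genuine gap at its central step.
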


\begin{proof}

It suffices to prove (\ref{La4}) at any point where $v>1.$ For any
positive constant $C_1>0$, (\ref{La3}) and (\ref{dw3}) yield
\begin{equation}\label{group}\aligned
&\mc{L}(\log v)+C_1|\n \log v|^2\\
=&|B|^2+\sum_{i,j}\la_j^2 h_{j,ij}^2+2\sum_i\sum_{j<k}\la_j\la_k h_{k,ij}h_{j,ik}+C_1\sum_i\big(\sum_j \la_j h_{j,ij}\big)^2\\
=&\sum_{\a}\sum_{i,j>p}h_{\a,ij}^2+\sum_{i>p}I_i+\sum_{i>p}\sum_{1\leq j<k\leq p}II_{ijk}
+\sum_{1<i<j<k\leq p}III_{ijk}+\sum_{1\leq i\leq p}IV_i
 \endaligned
 \end{equation}
with
\begin{eqnarray}
I_i&=&\sum_{1\leq j\leq p}(2+\la_j^2)h_{j,ij}^2+C_1\big(\sum_j \la_j h_{j,ij}\big)^2\\
II_{ijk}&=&2h_{k,ij}^2+2h_{j,ik}^2+2\la_j\la_k h_{k,ij}h_{j,ik}
\end{eqnarray}
\begin{equation}\aligned
III_{ijk}=&2h_{i,jk}^2+2h_{j,ki}^2+2h_{k,ij}^2\\
          &+2\la_i\la_j h_{i,jk}h_{j,ki}+2\la_j\la_k h_{j,ki}h_{k,ij}+2\la_k\la_i h_{k,ij}h_{i,jk}
          \endaligned
\end{equation}
and
\begin{equation}
IV_i=(1+\la_i^2)h_{i,ii}^2+\sum_{1\leq j\leq p,j\neq i}\big[(2+\la_j^2)h_{j,ij}^2+h_{i,jj}^2+2\la_i\la_j h_{i,jj}h_{j,ij}\big]+
C_1\big(\sum_j \la_j h_{j,ij}\big)^2.
\end{equation}
Here we  group the terms  according to different types of the
indices of the coefficient of the second fundamental form similarly
to \cite{J-X-Y}. Note that both $I_j$ and $II_{ijk}$ vanish whenever
$p=n$, and $III_{ijk}$ vanishes whenever $p\leq 2$.

Obviously
\begin{equation}\label{I}
I_i\geq 2\sum_{1\leq j\leq p}h_{j,ij}^2.
\end{equation}
As shown in (3.16) of \cite{J-X-Y},
\begin{equation}\label{II}
II_{ijk}\geq (3-v)(h_{k,ij}^2+h_{j,ik}^2).
\end{equation}
 Using Cauchy-Schwarz inequality, one can proceed as in Lemma 3.1 of \cite{J-X-Y} to get
\begin{equation}\label{III}
III_{ijk}\geq (3-v)(h_{i,jk}^2+h_{j,ki}^2+h_{k,ij}^2).
\end{equation}

Denote
\begin{equation}
\tau:=\f{1}{2}(v-1),
\end{equation}
then
\begin{equation}\label{IV1}
\aligned
&IV_i-\f{1}{2}(3-v)\big(h_{i,ii}^2+\sum_{1\leq j\leq p,j\neq i}(h_{i,jj}^2+2h_{j,ij}^2)\big)\\
=&\sum_{1\leq j\leq p,j\neq i}\Big[(2\tau+\la_j^2)h_{j,ij}^2+\tau h_{i,jj}^2+2\la_i\la_j h_{i,jj}h_{j,ij}\Big]\\
 &+(\tau+\la_i^2)h_{i,ii}^2+C_1\big(\sum_j \la_j h_{j,ij}\big)^2.
\endaligned
\end{equation}
Completing the square yields
\begin{equation}\aligned
&(2\tau+\la_j^2)h_{j,ij}^2+\tau h_{i,jj}^2+2\la_i\la_j h_{i,jj}h_{j,ij}\\
=&(\tau^{\f{1}{2}}h_{i,jj}+\tau^{-\f{1}{2}}\la_i\la_j h_{j,ij})^2+(2\tau+\la_j^2-\tau^{-1}\la_i^2\la_j^2)h_{j,ij}^2\\
\geq&(2\tau+\la_j^2-\tau^{-1}\la_i^2\la_j^2)h_{j,ij}^2.
\endaligned
\end{equation}
Substituting it into (\ref{IV1}) implies
\begin{equation}\aligned\label{IV2}
&IV_i-\f{1}{2}(3-v)\big(h_{i,ii}^2+\sum_{1\leq j\leq p,j\neq i}(h_{i,jj}^2+2h_{j,ij}^2)\big)\\
\geq& (\tau+\la_i^2)h_{i,ii}^2+\sum_{1\leq j\leq p,j\neq i}(2\tau+\la_j^2-\tau^{-1}\la_i^2\la_j^2)h_{j,ij}^2+C_1\big(\sum_{1\leq j\leq p}\la_j h_{j,ij}\big)^2.
\endaligned
\end{equation}

If there exists 2 distinct indices $j,k\neq i$ satisfying
$$2\tau+\la_j^2-\tau^{-1}\la_i^2\la_j^2\leq 0$$
and
$$2\tau+\la_k^2-\tau^{-1}\la_i^2\la_k^2\leq 0,$$
then $\la_i^2>\tau$ and
$$\la_j^2,\la_k^2\geq \f{2\tau^2}{\la_i^2-\tau}.$$
It implies
$$\aligned
(1+\la_i^2)(1+\la_j^2)(1+\la_k^2)&\geq \f{(\la_i^2+1)(\la_i^2+2\tau^2-\tau)^2}{(\la_i^2-\tau)^2}\geq \f{(2\tau+1)^3}{\tau+1}\\
                                 &=\f{2v^3}{v+1}>v^2
\endaligned$$
where the second equality holds if and only if
$\la_i^2=\tau(2\tau+3)=\f{1}{2}(v-1)(v+2)$ (see (3.25) in
\cite{J-X-Y}). We obtain a contradiction to (\ref{v1}). Hence one
can find an index $k\neq i$, such that
$$2\tau+\la_j^2-\tau^{-1}\la_i^2\la_j^2>0\qquad \text{for all }j\notin\{i,k\}.$$
Denote
$$s:=\sum_{1\leq j\leq p,j\neq k} \la_j h_{j,ij},$$
then by Cauchy-Schwarz inequality,
\begin{equation}\label{IV3}
\aligned
&(\tau+\la_i^2)h_{i,ii}^2+\sum_{1\leq j\leq p,j\notin\{i,k\}}(2\tau+\la_j^2-\tau^{-1}\la_i^2\la_j^2)h_{j,ij}^2\\
\geq & \Big(\f{\la_i^2}{\tau+\la_i^2}+\sum_{1\leq j\leq p,j\notin\{i,k\}}\f{\la_j^2}{2\tau+\la_j^2-\tau^{-1}\la_i^2\la_j^2}\Big)^{-1}s^2.
\endaligned
\end{equation}
Now we denote
$$\aligned
a&:=\f{\la_i^2}{\tau+\la_i^2}+\sum_{1\leq j\leq p,j\notin\{i,k\}}\f{\la_j^2}{2\tau+\la_j^2-\tau^{-1}\la_i^2\la_j^2}, \\
b&:=2\tau+\la_k^2-\tau^{-1}\la_i^2\la_k^2.
\endaligned$$
We only need to deal with $b<0$ case. Substituting (\ref{IV3}) into
(\ref{IV2}) gives
\begin{equation}\label{IV4}\aligned
&IV_i-\f{1}{2}(3-v)\big(h_{i,ii}^2+\sum_{1\leq j\leq p,j\neq i}(h_{i,jj}^2+2h_{j,ij}^2)\big)\\
\geq& a^{-1}s^2+bh_{k,ik}^2+C_1(s+\la_k h_{k,ik})^2\\
=&(C_1+a^{-1})s^2+(C_1\la_k^2+b)h_{k,ik}^2+2C_1\la_k sh_{k,ik}
\endaligned
\end{equation}
which is nonnegative  if and only if
$$0\leq (C_1+a^{-1})(C_1\la_k^2+b)-(C_1\la_k)^2=C_1(a^{-1}\la_k^2+b)+a^{-1}b.$$
Hence the remain work is to prove
$$\f{a^{-1}\la_k^2+b}{a^{-1}b}=a+b^{-1}\la_k^2=\f{\la_i^2}{\tau+\la_i^2}+\sum_{1\leq j\leq p,j\neq i}\f{\la_j^2}{2\tau+\la_j^2-\tau^{-1}\la_i^2\la_j^2}$$
is bounded from above by $-\de_0$ with a positive constant $\de_0$;
thereby $C_1:=\de_0^{-1}$ is the required constant.

Without loss of generality, we assume $i=1$ and $k=2$; let $r=\la_1^2$ and $t\in \R^+$ satisfying
$$(1+r)(1+t)=v^2.$$
As shown in the proof of Lemma 3.2 in \cite{J-X-Y},
$$\f{\la_1^2}{\tau+\la_1^2}+\sum_{2\leq j\leq p}\f{\la_j^2}{2\tau+\la_j^2-\tau^{-1}\la_1^2\la_j^2}\leq \f{r}{\tau+r}+\f{t}{2\tau+t-\tau^{-1}rt}.$$
Hence it suffices to prove
\begin{equation}\label{F0}
\sup_\Om F(r,t)\leq -\de_0
\end{equation}
for a positive constant $\de_0$ not depending on $v\in (1,3)$,
where
\begin{equation}
F(r,t):=\f{r}{\tau+r}+\f{t}{2\tau+t-\tau^{-1}rt}
\end{equation}
and
\begin{equation}\label{Om}
\Om:=\{(r,t)\in \R^+\times \R^+: (1+r)(1+t)=v^2, r>\tau, t\geq \f{2\tau}{\tau^{-1}r-1}\}.
\end{equation}

We rewrite $F(r,t)$ as
\begin{equation}\label{F}\aligned
F(r,t)&=\big(1+\f{\tau}{r}\big)^{-1}+\big(1-\tau^{-1}r+\f{2\tau}{t}\big)^{-1}\\
      &=\big(1+\f{\tau}{r}\big)^{-1}\big(2+\tau\big(\f{1}{r}+\f{2}{t}\big)-\tau^{-1}r\big)\big(1-\tau^{-1}r+\f{2\tau}{t}\big)^{-1}\\
      &:=F_1(r)^{-1}F_2(r,t)\big(F_2(r,t)-F_1(r)\big)^{-1}
      \endaligned
\end{equation}
From $r\geq \tau$ we immediately get
\begin{equation}\label{F1}
F_1(r)=1+\f{\tau}{r}\leq 2.
\end{equation}
$(1+r)(1+t)=v^2$ gives $t=\f{v^2-1-r}{1+r}$ and moreover
\begin{equation}\aligned
F_2(r,t)&=2+\tau\big(\f{1}{r}+\f{2}{t}\big)-\tau^{-1}r=2+\tau\big(\f{1}{r}+\f{2(1+r)}{v^2-1-r}\big)-\tau^{-1}r\\
        &=\f{r^3+(2\tau^2-2\tau-v^2+1)r^2+(\tau^2+2\tau(v^2-1))r+\tau^2(v^2-1)}{\tau\, r(v^2-1-r)}\\
        &=\f{2r^3-(v-1)(v+5)r^2+(v-1)^2(2v+\f{5}{2})r+\f{1}{2}(v-1)^3(v+1)}{(v-1) r(v^2-1-r)}.
\endaligned
\end{equation}
Let $\th:=\f{r}{v-1}$, then $\tau< r\leq (1+r)(1+t)-1=v^2-1$ implies $\th\in (\f{1}{2},v+1]$, and
\begin{equation}\label{F2}
F_2=\f{2\th^3-(v+5)\th^2+(2v+\f{5}{2})\th+\f{1}{2}(v+1)}{\th(v+1-\th)}:=\f{H_1(v,\th)}{H_2(v,\th)}.
\end{equation}
It is easily-seen that
\begin{equation}\label{H2}
H_2(v,\th)=\th(v+1-\th)\leq \f{1}{4}(v+1)^2\leq 4.
\end{equation}
Denote
$$\mathcal{D}:=\big\{(v,\th):v\in (1,3), \th\in (\f{1}{2},v+1]\big\}.$$
Observing that $H_1(\cdot,\th)$ is an affine linear function in $v$
for any fixed $\th$, we have
\begin{equation*}
\inf_\mathcal{D}H_1=\min\Big\{\inf_{\th\in [\f{1}{2},2]}H_1(1,\th),\inf_{\th\in [2,4]} H_1(\th-1,\th),\inf_{\th\in [\f{1}{2},4]}H_1(3,\th)\Big\}.
\end{equation*}
A straightforward calculation shows
$$\aligned
\inf_{\th\in [\f{1}{2},2]}H_1(1,\th)&=\inf_{\th\in [\f{1}{2},2]}\big(2\th(\th-\f{3}{2})^2+1\big)=1,\\
\inf_{\th\in [2,4]}H_1(\th-1,\th)&=\inf_{\th\in [2,4]}\th(\th-1)^2=2,\\
\inf_{\th\in [\f{1}{2},4]}H_1(3,\th)&=\inf_{\th\in [\f{1}{2},4]}\big(2\th(\th-2)^2+\f{1}{2}\th+2\big)>2.
\endaligned$$
Therefore
\begin{equation}\label{H1}
H_1(v,\th)\geq 1.
\end{equation}
Substituting (\ref{H2}) and (\ref{H1}) into (\ref{F2}) gives
\begin{equation}
F_2\geq \f{1}{4}.
\end{equation}
Since $F_2-F_1=1-\tau^{-1}r+\f{2\tau}{t}<0$, we have
\begin{equation}\label{F12}
|F_2-F_1|\leq |-F_1|\leq 2.
\end{equation}
Thereby one can obtain
\begin{equation}
|F|=|F_1|^{-1}F_2|F_2-F_1|^{-1}\geq \f{1}{16}
\end{equation}
by substituting (\ref{F1}), (\ref{F2}) and (\ref{F12}) into
(\ref{F}); in other words, (\ref{F0}) holds true with positive
constant $\de_0=\f{1}{16}$ and moreover the right hand side of
(\ref{IV4}) is non-negative, i.e.
\begin{equation}\label{IV}
IV_i\geq \f{1}{2}(3-v)\big(h_{i,ii}^2+\sum_{1\leq j\leq p,j\neq i}(h_{i,jj}^2+2h_{j,ij}^2)\big).
\end{equation}

Finally (\ref{La4}) is followed from (\ref{group}), (\ref{I}), (\ref{II}), (\ref{III}) and (\ref{IV}).
\end{proof}

\begin{rem}

From the above proof, one can take $C_1=16$ in (\ref{La4}).

\end{rem}

Let $h:=h(\log v)$, then
$$\aligned
\mc{L}h&=\De h-\f{1}{2}\lan X,\n h\ran=h''|\n \log v|^2+h'\De \log v-\f{1}{2}h'\lan X,\n \log v\ran\\
&=h'\mc{L}(\log v)+h''|\n \log v|^2
\endaligned$$
where $h'$ ($h''$) is the first (second) derivation of $h$ with
respect to $\log v$. Now we choose
\begin{equation}
h:=v^{C_1}=\exp(C_1\log v),
\end{equation}
then by Proposition \ref{subhar3},
\begin{equation}\label{L3}
\mc{L}h=C_1 h\big(\mc{L}(\log v)+C_1|\n \log v|^2\big)\geq \f{1}{2}C_1h(3-v)|B|^2.
\end{equation}
Then one can proceed as in \S \ref{hyp} to obtain

\begin{thm}\label{Ri3}
Let $u=(u^1,\cdots,u^m):\R^n\ra \R^m$ be a smooth vector-valued function, such that $M=\text{graph}\ u$ is a self-shrinker. If the slope of $u$
\begin{equation}\label{slope}
\De_u:=\det\Big(\de_{ij}+\sum_\a u_i^\a u_j^\a\Big)^{\f{1}{2}}<3,
\end{equation}
then $u$ has to be linear and $M$ has to be a linear subspace.
\end{thm}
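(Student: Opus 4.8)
The plan is to mimic the generalized stability argument of \S\ref{hyp} used for Theorem \ref{Ri1}, with the bounded auxiliary function $h=v^{C_1}$ playing the role that $f$ played there. First I would record that the hypothesis $\De_u<3$ says exactly that $v<3$ on $M$, since the two definitions of $v$ in (\ref{v}) and (\ref{v2}) agree; hence Proposition \ref{subhar3} applies and yields (\ref{L3}), namely $\mc{L}h\ge \f12 C_1 h(3-v)|B|^2$, and moreover $1\le h<3^{C_1}$ is bounded. Since an entire graph is automatically complete and properly embedded, $M$ has Euclidean volume growth by \cite{DX}, which is the global ingredient the argument needs.

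The key step is to produce the analogue of the stability inequality (\ref{st1}). I would multiply (\ref{L3}) by the nonnegative weight $\phi^2 h\rho$, with $\phi$ a cutoff function, integrate, use $\rho\,\mc{L}h=\text{div}(\rho\n h)$, and integrate by parts on the left. After a Cauchy--Schwarz and Young absorption of the cross term, exactly as in the derivation of (\ref{st1}), this gives
$$\f12\int_M \phi^2|\n h|^2\rho+\f12 C_1\int_M \phi^2 h^2(3-v)|B|^2\rho\le 2\int_M h^2|\n\phi|^2\rho,$$
where both terms on the left are nonnegative because $h\ge 1$, $3-v>0$ and $|B|^2\ge 0$.

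Then I would insert the cutoff with $\phi\equiv1$ on $D_R$, $\phi\equiv0$ off $D_{2R}$, and $|\n\phi|\le c_0/R$. Using $h<3^{C_1}$ and $\rho\le e^{-R^2/4}$ on $D_{2R}\setminus D_R$, the right-hand side is at most $C R^{-2}e^{-R^2/4}\,\text{Vol}(D_{2R}\setminus D_R)$, which tends to $0$ by Euclidean volume growth. Letting $R\to\infty$ forces $\int_M h^2(3-v)|B|^2\rho=0$; since the integrand is continuous, nonnegative, and its factor $h^2(3-v)$ is strictly positive everywhere (here the pointwise strict inequality $v<3$ is essential), I conclude $|B|\equiv0$. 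Finally, a complete totally geodesic submanifold of $\R^{n+m}$ is an affine subspace, and being a self-shrinker with $H\equiv0$ it satisfies $X^N\equiv0$ by (\ref{SS}), so it passes through the origin; hence $M$ is a linear subspace and, as an entire graph, $u$ is linear.

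As for the main obstacle: the genuinely hard analytic estimate --- controlling $\Hess\log v$ and producing the constant $C_1$ so that (\ref{La4}) and hence (\ref{L3}) hold --- is already contained in Proposition \ref{subhar3}. Within this theorem the only delicate points are making the boundary terms vanish, which relies on the boundedness of $h$ (that is, on $v<3$) together with the Euclidean volume growth from \cite{DX}, and the observation that the \emph{strict} inequality $v<3$ (rather than $v\le 3$) is precisely what upgrades ``$|B|^2$ integrates to zero against a positive weight'' into ``$|B|\equiv0$'', thereby excluding the cylinder alternative that appears in Theorem \ref{Ri1}.
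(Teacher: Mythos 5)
Your proposal is correct and follows essentially the same route as the paper: invoke Proposition \ref{subhar3} to get $\mc{L}h\ge \f12 C_1h(3-v)|B|^2$ for the bounded function $h=v^{C_1}$, multiply by $\phi^2h\rho$, integrate by parts with a cutoff, and kill the right-hand side using Euclidean volume growth and the factor $e^{-R^2/4}$. The only (harmless) deviations are that you conclude $|B|\equiv 0$ in one step from the strict positivity of $h^2(3-v)$, whereas the paper first deduces $v\equiv\mathrm{const}$ and then reuses the inequality, and that you get volume growth from properness via \cite{DX} while the paper reads it off directly from the uniform equivalence of $g$ with the Euclidean metric under the slope bound.
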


\begin{proof}
Denote $F:\R^n\ra M$
$$x=(x_1,\cdots,x_n)\mapsto \big(x,u(x)\big)$$
then $F$ is a diffeomorphism and $M$ can be viewed as $\R^n$
equipped with metric $g=g_{ij}dx_idx_j$, where
$g_{ij}=\de_{ij}+u_i^\a u_j^\a.$ Obviously the eigenvalues of
$(g_{ij})$ are bounded from below by 1. Moreover
$\De_u=\det(g_{ij})^{\f{1}{2}}<3$ implies
\begin{equation}
|\xi|^2\leq \xi_ig_{ij}\xi_j<9|\xi|^2\qquad \forall \xi\in \R^n.
\end{equation}
Hence $M$ is a simple Riemannian manifold with Euclidean volume growth.

(\ref{slope}) equals to say $v=\De_u$ is a smooth
function on $M$ taking values in $[1,3)$. Let $\phi$ be a smooth
function on $M$ with compact supporting set, multiplying $\phi^2 h$
with both sides of (\ref{L3}) and integrating by parts yield
$$\aligned
\f{1}{2}C_1\int_M \phi^2 h^2(3-v)|B|^2\rho&\leq \int_M \phi^2 h\ \text{div}(\rho \n h)\\
&=-\int_M \lan \n(\phi^2 h),\n h\ran \rho\\
&=-\int_M \phi^2|\n h|^2\rho-2\int_M \lan h\n \phi,\phi\n h\ran \rho\\
&\leq -\f{1}{2}\int_M \phi^2|\n h|^2\rho+2\int_M |\n \phi|^2
h^2\rho,
\endaligned$$
i.e.
\begin{equation}\label{L4}
C_1\int_M \phi^2 h^2(3-v)|B|^2\rho+\int_M \phi^2|\n h|^2\rho\leq 4\int_M |\n \phi|^2 h^2\rho.
\end{equation}
Now we choose $\phi$ to be a cut-off function which satisfies $\phi\equiv 1$ on $D_R$, $\phi\equiv 0$ outside $D_{2R}$ and $|\n\phi|\leq \f{c_0}{R}$,
then
\begin{equation}\aligned
\int_{D_R}|\n h|^2\rho&\leq \int_M \phi^2|\n h|^2 \rho\leq 4\int_M |\n\phi|^2 h^2\rho\\
                      &=4\int_{D_{2R}\backslash D_R}|\n \phi|^2 h^2\rho\leq \f{4c_0^2}{R^2}3^{2C_1}e^{-\f{R^2}{4}}\text{Vol}(D_{2R}\backslash D_R).
                      \endaligned
\end{equation}
Letting $R\ra +\infty$ forces $|\n h|^2\equiv 0$, furthermore $v\equiv \text{const}$. Assume $v\equiv v_0<3$ and denote
$h_0:=v_0^{C_1}$. Again using (\ref{L4}) gives
\begin{equation}\aligned
C_1h_0^2(3-v_0)\int_{D_R}|B|^2\rho&\leq C_1\int_M \phi^2 h^2(3-v)|B|^2\rho\\
 &\leq 4\int_M |\n \phi|^2h^2\rho\\
 &\leq \f{4c_0^2}{R^2}3^{2C_1}e^{-\f{R^2}{4}}\text{Vol}(D_{2R}\backslash D_R).
 \endaligned
 \end{equation}
Let $R\ra +\infty$, we have $|B|^2\equiv 0$. Hence $M$ has to be a linear subspace.

\end{proof}

\bigskip

\Section{Appendix}{Appendix}

\medskip

Let $f:\ (M^m,g)\rightarrow (N^n,h)$ be a smooth map and let $w$ be a given smooth positive function in $M$.
Let $\{e_i\}_{i=1}^m$ be a local orthonormal frame field in $M$, $\n$ be the Levi-Civita connection of $M$ and $\lan\cdot,\cdot\ran$ be the inner
product of $(N,h)$.
Then we can define $w-$weighted energy density of the given map $f$ by
$$e_w(f)=\f12\lan f_*e_i,f_*e_i\ran w.$$
Let $\Om$ be an arbitrary domain in $M$ such that $\overline{\Om}$ is compact. The integral of the $w-$weighted energy density over $\Om$ yields the $w-$weighted energy of the map $f$:
$$E_{w,\Om}(f)=\int_\Om e_w(f)=\f12\int_\Om \lan f_*e_i,f_*e_i\ran w.$$

Now let's deduce the first variational formula for weighted map.

Let $f_t$ be a 1-parameter family of maps from $M$ to $N$ with $f_0=f$ and $\f{df_t}{dt}\mid_{t=0}$ having compact support in $\Om$. Let $\{e_i\}$ be a normal coordinate frame of $M$ at $p\in M$, then by (1.2.12) of \cite{X3},
\begin{equation}\aligned\label{1var}
\f{d}{dt}E_{w,\Om}(f_t)=&\f12\f{d}{dt}\int_\Om\lan f_{t*}e_i,f_{t*}e_i\ran w\\
=&\int_\Om\left\lan\n_{\f{\p}{\p t}} f_{t*}e_i,f_{t*}e_i\right\ran w
=\int_\Om\left\lan\n_{e_i}\f{df_t}{dt},f_{t*}e_i\right\ran w\\
=&\int_\Om\left\lan \n_{e_i}\left(\left\lan\f{df_t}{dt},wf_{t*}e_j\right\ran e_j\right),e_i\right\ran-\int_\Om\left\lan\f{df_t}{dt},\n_{e_i}(wf_{t*}e_i)\right\ran\\
=&\int_\Om\mathrm{div}\left(\left\lan\f{df_t}{dt},wf_{t*}e_j\right\ran e_j\right)-\int_\Om\left\lan\f{df_t}{dt},\n_{e_i}(wf_{t*}e_i)\right\ran\\
=&-\int_\Om\left\lan\f{df_t}{dt},\tau_w(f_t)\right\ran w.\\
\endaligned
\end{equation}
Here, $\tau_w(f)=w^{-1}\n_{e_i}(wf_{*}e_i)=w^{-1}(wf^\a_i)_i\f{\p}{\p y_\a}$, and $df=\f{\p f^\a}{\p x_i}dx_i\otimes\f{\p}{\p y_\a}$ is a section of the bundle $T^*M\otimes f^{-1}TN$.

We call $f$ is a \emph{$w-$weighted harmonic map} if $\tau_w(f)=0$.

\bibliographystyle{amsplain}

\end{document}